\newcommand{\R}{\mathbb{R}}
\newcommand{\N}{\mathbb{N}}
\newcommand{\LP}{\mathcal{LP}}
\theoremstyle{plain}
\newtheorem{theorem}{Theorem}[section]
\newtheorem{corollary}[theorem]{Corollary}
\theoremstyle{definition}
\newtheorem{example}{Example}
\theoremstyle{remark}
\numberwithin{equation}{section}
\begin{document}

\title{Extended Laguerre inequalities and a criterion for real zeros}

\author{David~A.~Cardon}

\address{Department of Mathematics\\
Brigham Young University\\
Provo, Utah 84602, USA\\
E-mail: cardon@math.byu.edu\\
http://www.math.byu.edu/cardon}

\begin{abstract}
Let $f(z)=e^{-bz^2}f_1(z)$ where $b \geq 0$ and $f_1(z)$ is a
real entire function of genus $0$ or $1$. We give a necessary
and sufficient condition in terms of a sequence of inequalities
for all of the zeros of $f(z)$ to be real. These inequalities
are an extension of the classical Laguerre inequalities.
\end{abstract}

\keywords{Laguerre-P\'olya class, real zeros, Laguerre inequalities}

\maketitle

\section{Introduction}

The Laguerre-P\'olya class, denoted $\LP$, is the collection of
real entire functions obtained as uniform limits on compact
sets of polynomials with real coefficients having only real
zeros. It is known that a function $f$ is in $\LP$ if and only
if it can be represented as
\begin{equation} \label{eqn:WeierstrassProduct}
f(z) = e^{-b z^2} f_1(z)
\end{equation}
where $b \geq 0$ and where $f_1(z)$ is a real entire function
of genus $0$ or $1$ having only real zeros. The basic theory of
$\LP$ can be found in \cite[Ch.~8]{Levin1980} and
\cite[Ch.~5.4]{RahmanSchmeisser2002}.

In this paper, we extend a theorem of Csordas, Patrick, and
Varga on a necessary and sufficient condition for certain real
entire functions to belong to the Laguerre-P\'olya class. They
proved the following:

\begin{theorem}
 \label{thm:CsordasVargaPatrick}
Let
\[
f(z)=e^{-bz^2}f_1(z), \qquad (b \geq 0, \,f(z) \not\equiv 0),
\]
where $f_1(z)$ is a real entire function of genus $0$ or $1$.
Set
\begin{equation} \label{eqn:Ln}
L_n[f](x) = \sum_{k=0}^{2n} \frac{(-1)^{k+n}}{(2n)!} \binom{2n}{k} f^{(k)}(x) f^{(2n-k)}(x)
\end{equation}
for $x \in \R$ and $n \geq 0$. Then $f(z) \in \LP$ if and only if
\begin{equation}  \label{eqn:Lnpositive}
L_n[f](x) \geq 0
\end{equation}
for all $x \in \R$ and all $n \geq 0$.
\end{theorem}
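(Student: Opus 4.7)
The plan is to reduce everything to the following generating-function identity: for any real entire function $g$ and any real $x,y$,
\[
|g(x+iy)|^{2} \;=\; g(x+iy)\,g(x-iy) \;=\; \sum_{n=0}^{\infty} L_n[g](x)\,y^{2n}.
\]
This is verified by Taylor-expanding $g(x\pm iy)$ about $x$, multiplying the two series, observing that odd powers of $y$ cancel via the $j\leftrightarrow k$ symmetry of the resulting double sum, and matching the coefficient of $y^{2n}$ against (\ref{eqn:Ln}) after collecting the factor $i^{2n}=(-1)^n$.

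Let $\mathcal{S}$ denote the class of real entire $g$ with $L_n[g](x)\geq 0$ on $\R$ for every $n$; equivalently, by the identity, the class for which $y\mapsto|g(x+iy)|^{2}$ is a power series in $y^{2}$ with non-negative coefficients at every fixed $x$. The Cauchy product formula yields $L_n[gh]=\sum_{j+k=n}L_j[g]\,L_k[h]$, so $\mathcal{S}$ is closed under multiplication, and it is closed under locally uniform limits because each $g^{(k)}(x)$ depends continuously on $g$. For the forward direction I would then check the elementary building blocks of the Hadamard factorization of a function in $\LP$: $z-r$ with $r\in\R$ gives $(x-r)^{2}+y^{2}$; $e^{\alpha z}$ with $\alpha\in\R$ gives $e^{2\alpha x}$; $e^{-bz^{2}}$ with $b\geq 0$ gives $e^{-2bx^{2}}e^{2by^{2}}$; and each convergence factor $(1-z/z_k)e^{z/z_k}$ with $z_k\in\R$ is a product of two of these. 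All of these lie in $\mathcal{S}$, so every partial Hadamard product does, and locally uniform convergence gives $f\in\mathcal{S}$.

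For the converse, fix $x_0\in\R$ and view $|f(x_0+iy)|^{2}=\sum_n L_n[f](x_0)\,(y^{2})^{n}$ as a power series in $u=y^{2}\geq 0$ with non-negative coefficients; it is therefore monotonically non-decreasing in $u$. If $f$ had a non-real zero at $x_0+iy_0$, then this power series would vanish at $u=y_0^{2}>0$ and hence on all of $[0,y_0^{2}]$, so $f$ would vanish on the entire segment $\{x_0+iy:|y|\leq|y_0|\}$, forcing $f\equiv 0$. Thus every zero of $f$ is real, and combined with the hypothesis $f=e^{-bz^{2}}f_1(z)$ with $b\geq 0$ and $f_1$ of genus $0$ or $1$, this places $f$ in $\LP$. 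The whole argument is carried by the generating-function identity; after that, the forward direction is a routine multiplicativity-plus-limit argument, whose only delicate point is the passage from finite partial products to the full Hadamard product, and the converse is the short monotonicity argument just given.
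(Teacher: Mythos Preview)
Your argument is correct and, once one specializes the paper's more general setup to $g(z)=1+z^2$, it coincides with the paper's own proof of Theorem~\ref{theorem:main}, of which Theorem~\ref{thm:CsordasVargaPatrick} is the stated corollary. Your generating-function identity $|f(x+iy)|^2=\sum_n L_n[f](x)\,y^{2n}$ is exactly the paper's $\Phi(x,t)=f(x+it)f(x-it)=\sum_k A_{2k}(x)\,t^{2k}$ in this case, and your converse (a non-real zero at $x_0+iy_0$ forces every coefficient $L_n[f](x_0)$ to vanish, hence $f\equiv 0$) is identical to the paper's. The only variation is in the forward direction: the paper approximates an arbitrary $f\in\LP$ by real-rooted polynomials and checks nonnegativity directly for products of linear factors via \eqref{eqn:polynomialcasePhi}, whereas you work with the Hadamard factorization and verify the building blocks $z-r$, $e^{\alpha z}$, and $e^{-bz^2}$ separately. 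Both routes rest on the same multiplicativity $L_n[fg]=\sum_{j+k=n}L_j[f]\,L_k[g]$ and the same passage to locally uniform limits; the polynomial-approximation route is marginally more economical since only the linear factor needs to be checked.
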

The forward direction is due to Patrick~\cite[Thm.
1]{Patrick1973}. The reverse direction was proved by Csordas
and Varga~\cite[Thm.~2.9]{CsordasVarga1990}.
Theorem~\ref{thm:CsordasVargaPatrick} is significant because it
gives a nontrivial sequence of inequality conditions that hold
for functions in the Laguerre-P\'olya class. The case $n=1$
reduces to the classical Laguerre inequality which says that if
$f(z) \in \LP$, then
\[
[f'(x)]^2 - f(x) f''(x) \geq 0
\]
for $x \in \R$. Consequently, inequalities like those in
Theorem~\ref{thm:CsordasVargaPatrick} are sometimes called
Laguerre-type inequalities. Csordas and Escassut discuss the
inequalities $L_n[f](x) \geq 0$ and related Laguerre-type
inequalities in~\cite{CsordasEscassut2005}. Other results
on similar inequalities of Tur\'an and Laguerre types can be
found in~\cite{CardonRich2008} and~\cite{CravenCsordas2002}.

\section{An extension of Laguerre-type inequalities}

In this section, we extend
Theorem~\ref{thm:CsordasVargaPatrick} and give new necessary
and sufficient inequality conditions for a function to belong
to the Laguerre-P\'olya class.

First we generalize the operator $L_n$ defined in
Theorem~\ref{thm:CsordasVargaPatrick}. Let
\begin{equation}\label{eqn:g}
g(z) = \sum_{\ell=0}^M c_{\ell} z^{\ell} = \prod_{j=1}^M (z+\alpha_j)
\end{equation}
be a polynomial with complex roots. Define
$\Phi(z,t)$ as the product
\begin{equation}
\Phi(z,t) = \prod_{j=1}^M f(z+\alpha_j t).
\end{equation}
The coefficients of the Maclaurin series of $\Phi(z,t)$ with
respect to $t$ are functions of $z$, and we write:
\begin{equation}
\Phi(z,t) = \sum_{k=0}^{\infty} A_k(z) t^k,
\end{equation}
where
\begin{equation} \label{eqn:Ak}
A_k(z) = \frac{1}{k!}\Big[ \frac{d^k}{dt^k} \Phi(z,t)\Big]_{t=0}
=\frac{1}{k!}\Big[\frac{d^k}{dt^k} \prod_{j=1}^M f(z+\alpha_j t)\Big]_{t=0}.
\end{equation}
Since $f(z)$ is entire, each $A_k(z)$ is entire. Another
expression for $A_k(z)$ is given in~\eqref{eqn:Akexpanded}. The
choice $g(z)=1+z^2=(z-i)(z+i)$ produces $A_{2k+1}(z)=0$ and
$A_{2k}(z)=L_k[f](z)$ as in \eqref{eqn:Ln} of
Theorem~\ref{thm:CsordasVargaPatrick}. Thus, we may regard the
sequence of functions $A_k(z)$ as a generalization of the
sequence $L_k[f](z)$. We note that the zeros of $A_k(z)$ were
studied by Dilcher and Stolarsky
in~\cite{DilcherStolarsky1992}. In \S\ref{section:Examples}, we
give several examples of $A_k(z)$ for interesting choices of
$g(z)$.

\begin{theorem} \label{theorem:main}
Let $f(z) = e^{-b z^2} f_1(z)$, where $f_1(z) \not\equiv0$ is a
real entire function of genus $0$ or $1$ and $b \geq 0$. Assume
$g(z)$ in \eqref{eqn:g} is an even polynomial
with non-negative real coefficients having at least one
non-real root. Then $f \in \LP$ if and only if
\begin{equation}
A_k(x) \geq 0
\end{equation}
for all $x \in \R$ and all $k\geq 0$.
\end{theorem}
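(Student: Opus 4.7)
My plan is to analyze the generating function
\[
\Phi(x, t) = \prod_{j=1}^M f(x + \alpha_j t) = \sum_{k \geq 0} A_k(x) t^k
\]
directly, exploiting the two structural hypotheses on $g$: evenness forces $\{\alpha_j\}$ to be closed under $\alpha \mapsto -\alpha$ (so $M$ is even, say $M = 2N$, $\sum_j \alpha_j = 0$, and $\Phi(x, t)$ is an even function of $t$), while non-negative real coefficients plus reality of the coefficients of $g$ force $\{\alpha_j\}$ also to be closed under complex conjugation.

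For the forward direction, I would substitute the Weierstrass factorization $f(z) = C z^\nu e^{az - bz^2}\prod_k (1 - z/\xi_k) e^{z/\xi_k}$ (with $\xi_k$ real) into $\Phi(x, t)$ factor by factor. Using $\sum_j \alpha_j = 0$, the sums $\sum_j(x + \alpha_j t)$ and $\sum_j(x + \alpha_j t)^2$ collapse, so the only $t$-dependent piece from the exponentials is $e^{-bt^2 \sum_j \alpha_j^2}$. The central algebraic identity, a consequence of $\prod_j (w - \alpha_j) = g(w)$ (valid because $g$ is even), is
\[
\prod_j (1 - u - \alpha_j s) = s^{2N}\, g\bigl((1-u)/s\bigr) = \sum_{m=0}^{N} c_{2m}(1-u)^{2m}\, s^{2(N-m)},
\]
which at every real $u$ is a polynomial in $s^2$ with non-negative coefficients. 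Taking $u = x/\xi_k$, $s = t/\xi_k$ shows each canonical factor contributes non-negative Taylor coefficients in $t^2$; a parallel argument using $\prod_j (x + \alpha_j t) = t^{2N} g(x/t)$ handles the $z^\nu$ piece. Finally, writing $g(z) = h(z^2)$, the fact that $h$ has non-negative coefficients together with Vieta forces $\sum_j \alpha_j^2 \leq 0$, so $e^{-bt^2 \sum_j \alpha_j^2}$ likewise expands with non-negative Taylor coefficients. Convolving these factors and confirming convergence of the resulting infinite product yield $A_k(x) \geq 0$ for every $k$ and every real $x$.

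For the reverse direction, assume $A_k(x) \geq 0$ for all $k$ and all real $x$. For each fixed $x$ the series $\Phi(x, t) = \sum_k A_k(x) t^k$ is strictly positive for every $t > 0$: it cannot vanish identically in $t$, since $\prod_j f(x + \alpha_j t) \equiv 0$ in $t$ would force $f \equiv 0$, which is excluded. Hence $f(x + \alpha_j t) \neq 0$ for every $j$, every real $x$, and every $t > 0$. Since $g$ has at least one non-real root, some $\alpha_{j_0}$ satisfies $\operatorname{Im} \alpha_{j_0} \neq 0$, and the map $(x, t) \mapsto x + \alpha_{j_0} t$ sends $\R \times (0, \infty)$ onto an open half-plane; thus $f$ has no zero in that half-plane. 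Real-coefficient symmetry of $f$ then rules out any non-real zero of $f$, and combining this with the representation $f = e^{-bz^2} f_1$ with $f_1$ of genus $0$ or $1$ places $f$ in $\LP$.

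The main obstacle will be the forward direction's bookkeeping: verifying the product identity for $\prod_j(1 - u - \alpha_j s)$, establishing the sign $\sum_j \alpha_j^2 \leq 0$, and tracking the cancellation of $1/\xi_k$-terms that preserves convergence of the infinite Weierstrass product after expansion in $t$. By contrast, the reverse direction reduces, once non-negativity of the full series is in hand, to a short zero-location argument that leverages the single hypothesis that $g$ has a non-real root.
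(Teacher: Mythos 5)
Your proposal is correct in substance, and its reverse direction is essentially the paper's argument in a different guise: the paper fixes a non-real root $\alpha_s$ of $g$, writes a hypothetical non-real zero as $z_0=x_0+\alpha_s t_0$ with $x_0,t_0$ real, $t_0\neq 0$, and concludes from $0=\sum_k A_{2k}(x_0)t_0^{2k}$ with nonnegative terms that $\Phi(x_0,\cdot)\equiv 0$ and hence $f\equiv 0$; your half-plane formulation is the same computation run in the contrapositive-free direction, ending with the same conjugate-symmetry step. Where you genuinely diverge is the forward implication. The paper proves $A_{2k}(x)\geq 0$ first for polynomials $f(z)=\prod_i(z+r_i)$ via the identity in \eqref{eqn:Phiproductsum}, i.e. $\prod_j\bigl((z+r_i)+\alpha_j t\bigr)=\sum_{\ell}c_{2\ell}(z+r_i)^{2\ell}t^{M-2\ell}$, and then handles general $f\in\LP$ by locally uniform polynomial approximation (which is how $\LP$ is defined here), passing the inequality to the limit through \eqref{eqn:Ak}. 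You instead substitute the Hadamard/Laguerre--P\'olya factorization and check coefficient nonnegativity factor by factor: the same identity $\prod_j(w-\alpha_j s)=\sum_m c_{2m}w^{2m}s^{M-2m}$ (valid since evenness makes $\{\alpha_j\}$ negation-symmetric) treats each canonical factor and the $z^{\nu}$ part, $\sum_j\alpha_j=0$ removes the linear exponential terms, and $\sum_j\alpha_j^2=e_1^2-2e_2=-2c_{M-2}\leq 0$ makes $e^{-bt^2\sum_j\alpha_j^2}$ coefficient-nonnegative. This buys a proof that never leaves the function $f$ itself, at the cost of the convergence bookkeeping you flag (locally uniform convergence in $t$ of the partial products for fixed real $x$, so that coefficientwise nonnegativity survives the limit), which is routine.

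One caveat, which your write-up shares with the paper's own proof: the hypotheses allow $0$ to be a root of $g$ (e.g. $g(z)=z^2(z^2+1)$), and then the claim that $\prod_j f(x+\alpha_j t)\equiv 0$ in $t$ forces $f\equiv 0$ fails at real $x$ with $f(x)=0$, because the factor with $\alpha_j=0$ already vanishes identically in $t$. The repair is short: write $g(z)=z^{2p}\tilde g(z)$ with $\tilde g(0)\neq 0$, so that $A_k^{g}(x)=f(x)^{2p}A_k^{\tilde g}(x)$; nonnegativity for $g$ then gives $A_k^{\tilde g}(x)\geq 0$ away from the isolated real zeros of $f$, hence everywhere by continuity, and your argument applies to $\tilde g$, which is still even with nonnegative coefficients and retains a non-real root. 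With that remark added, your plan goes through.
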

\begin{corollary}
The choice $g(z)=1+z^2$ in Theorem~\ref{theorem:main} gives
$f(z) \in \LP$ if and only if $L_k[f](x) \geq 0$ for all $x \in
\R$ and all $k \geq 0$, as stated in
Theorem~\ref{thm:CsordasVargaPatrick}.
\end{corollary}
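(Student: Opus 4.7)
The plan is to verify that $g(z) = 1+z^2$ satisfies the hypotheses of Theorem~\ref{theorem:main}, then to carry out the specialization of $\Phi(z,t)$ and $A_k(z)$ and identify them with the operators $L_k[f]$ in~\eqref{eqn:Ln}.

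First I would observe that $g(z) = 1+z^2$ is an even polynomial with non-negative real coefficients $c_0 = c_2 = 1$, and that its roots $\pm i$ are non-real. Thus the hypotheses of Theorem~\ref{theorem:main} are met. Writing $g(z) = (z+i)(z-i)$ identifies $\alpha_1 = i$ and $\alpha_2 = -i$, so
\[
\Phi(z,t) = f(z+it)\,f(z-it).
\]
Sending $t \mapsto -t$ merely exchanges the two factors, so $\Phi(z,-t) = \Phi(z,t)$. This evenness in $t$ immediately forces $A_{2k+1}(z) = 0$ for all $k$.

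Next I would compute $A_{2k}(z)$ by expanding each factor as a Taylor series in $t$ about $t=0$, multiplying, and collecting the coefficient of $t^{2k}$. Using $i^m(-i)^{2k-m} = (-1)^{k+m}$ and rewriting $1/(m!(2k-m)!)$ as $\binom{2k}{m}/(2k)!$, the result matches the definition of $L_k[f](z)$ in~\eqref{eqn:Ln} term by term. The only subtle point is sign bookkeeping in the powers of $i$; no substantive difficulty arises.

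With these two identifications in hand, the corollary follows by applying Theorem~\ref{theorem:main} to this particular $g$: the criterion ``$A_k(x) \geq 0$ for all $x \in \R$ and all $k \geq 0$'' becomes vacuous on odd indices and reduces on even indices to ``$L_k[f](x) \geq 0$ for all $x \in \R$ and all $k \geq 0$.'' This is exactly the condition appearing in Theorem~\ref{thm:CsordasVargaPatrick}, so the corollary is established.
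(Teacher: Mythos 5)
Your proposal is correct and follows essentially the same route as the paper, which states the identification $A_{2k+1}(z)\equiv 0$ and $A_{2k}(z)=L_k[f](z)$ for $g(z)=1+z^2$ (obtained exactly by the Taylor-expansion and sign computation $i^m(-i)^{2k-m}=(-1)^{k+m}$ you describe) and then reads off the corollary from Theorem~\ref{theorem:main}. Nothing is missing.
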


\begin{proof}[Proof of Theorem~\ref{theorem:main}]
Since $g(z)$ is an even polynomial, it follows that $\alpha_j$
is a root if and only if $-\alpha_j$ is a root with the same
multiplicity. So,
\[
\Phi(z,t) = \prod_{j=1}^M f(z+\alpha_jt) = \prod_{j=1}^M f(z-\alpha_j t) = \Phi(z,-t).
\]
Hence, $A_k(z)\equiv 0$ for all odd $k$ and we may write
\[
\Phi(z,t) = \sum_{k=0}^{\infty} A_{2k}(z) t^{2k}.
\]

Now assume $A_{2k}(x) \geq 0$ for all $x \in \R$ and all $k
\geq 0$. Let $f(z)=e^{-b z^2}f_1(z)$ where $b\geq 0$ and
$f_1(z)$ is a real entire function of genus $0$ or $1$, and
assume $f(z)$ is not identically zero. Suppose, by way of
contradiction, that $f(z)$ has a non-real root, say $z_0$. Let
$\alpha_s$ be any fixed non-real root of $g(z)$ and write
\[
z_0 = x_0 + \alpha_s t_0,
\]
where both $x_0$ and $t_0$ are real. Then
$f(z_0)=f(x_0+\alpha_s t_0)=0$, and
\[
0 = \Phi(x_0,t_0) = \prod_{j=1}^M f(x_0+\alpha_j t_0)
= \sum_{k=0}^{\infty} A_{2k}(x_0) t_0^{2k}.
\]
Assume $t \neq 0$. Then the nonnegativity of $A_{2k}(x_0)$
implies $A_{2k}(x_0)=0$ for all $k$. This in turn implies
$\Phi(x_0,t)$ is identically zero for all complex $t$. But that
is false since $f(z)$ is a nonzero entire function. Therefore,
$t_0=0$. Then $z_0 = x_0+\alpha_s t_0 = x_0$ is also real,
contradicting the choice of $z_0$. Thus, all the roots of
$f(z)$ are real and $f(z) \in \LP$.

Conversely, assuming $f(z) \in \LP$, we will show that
$A_{2k}(x) \geq 0$ for all $x \in \R$ and all $k \geq 0$. We
will show this when $f(z)$ is a polynomial and the result for
arbitrary $f(z) \in \LP$ will follow by taking limits. Let
\[
f(z) = \prod_{i=1}^n(z+r_i)
\]
where $r_1,\ldots,r_n$ are real. Calculating $\Phi(z,t)$ gives
\begin{align}
\Phi(z,t)
& = \prod_{j=1}^M f(z+\alpha_j t)  = \prod_{j=1}^M \prod_{i=1}^n(z+\alpha_j t + r_i) \nonumber \\
& = \prod_{i=1}^n \prod_{j=1}^M \big( (z+r_i) +\alpha_j t\big)
= \prod_{i=1}^n \sum_{\ell=0}^M c_{\ell}(z+r_i)^{\ell} t^{M-\ell},   \label{eqn:Phiproductsum}
\end{align}
where $g(z) = \prod_{j=1}^M(z+\alpha_j) = \sum_{\ell=0}^M
c_{\ell} z^{\ell}$. Since $g(z)$ is an even polynomial,
$c_{\ell}=0$ for odd $\ell$ and
\begin{equation} \label{eqn:polynomialcasePhi}
\Phi(z,t)=\prod_{i=1}^n \sum_{\ell=0}^{M/2} c_{2\ell}(z+r_i)^{2\ell}t^{M-2\ell}
= \sum_{k=0}^{nM/2} A_{2k}(z) t^{2k}.
\end{equation}
From \eqref{eqn:polynomialcasePhi}, $A_{2k}(z)$ is the sum of
products of terms of the form $c_{2\ell}(z+r_i)^{2\ell}$.
Because $c_{2\ell} \geq 0$ and $(z+r_i)^{2\ell}$ is a square,
it follows that $A_{2k}(x) \geq 0$ for real $x$.

Now let $f(z) \in \LP$ be an arbitrary function that is not a
polynomial. Then there exist polynomials $f_n(z) \in \LP$ such
that
\[
\lim_{n \rightarrow \infty} f_n(z) = f(z)
\]
uniformly on compact sets. The derivatives also satisfy
\[
\lim_{n \rightarrow \infty} f_n^{(k)}(z) = f^{(k)}(z)
\]
uniformly on compact sets. If we write
\[
\Phi_n(z,t) = \prod_{j=1}^M f_n(z+\alpha_j t) = \sum_{k=0}^{\infty} A_{n,2k}(z) t^{2k},
\]
we see from \eqref{eqn:Ak} that
\[
\lim_{n \rightarrow \infty} A_{n,2k}(z) = A_{2k}(z)
\]
uniformly on compact sets. Since $A_{n,2k}(x) \geq 0$ for real
$x$, the limit also satisfies this inequality. Thus,
for arbitrary $f(z) \in \LP$, $A_{2k}(x) \geq 0$ for $x \in\R$
and $k \geq 0$, completing the proof of the theorem.
\end{proof}

\section{Discussion and Examples}\label{section:Examples}

The function $A_k(z)$ in \eqref{eqn:Ak} is described in terms of
the $k$th derivative of a product of entire functions. Either
by using the generalized product rule for derivatives or by
expanding each $f(z+\alpha_j t)$ as a series and multiplying
series, one obtains the following formula for $A_k(z)$:
\begin{equation} \label{eqn:Akexpanded}
A_k(z)
=\sum_{\lambda \vdash k} \frac{m_{\lambda}(\alpha_1,\ldots,\alpha_M)}{\lambda_1!\cdots\lambda_r!}
f(z)^{M-r} \prod_{j=1}^{r} f^{(\lambda_j)}(z),
\end{equation}
where $\lambda \vdash k$ means that the sum is over all
unordered partitions $\lambda$ of $k$,
\[
\lambda=(\lambda_1,\ldots,\lambda_r) \qquad k = \lambda_1+\cdots+\lambda_r,
\]
where $r$ is the length of the partition $\lambda$, and
where $m_{\lambda}(\alpha_1,\ldots,\alpha_M)$ is the monomial
symmetric function of $M$ variables for the partition $\lambda$
evaluated at the roots $\alpha_1,\ldots,\alpha_M$.

The coefficients $c_{\ell}$ of $g(z)=\sum_{\ell=0}^M
c_{\ell}z^{\ell}=\prod_{j=1}^M(z+\alpha_j)$ are elementary
symmetric function of $\alpha_1,\ldots,\alpha_M$. The monomial
symmetric functions $m_{\lambda}(\alpha_1,\ldots,\alpha_M)$
appearing in \eqref{eqn:Akexpanded} can therefore be calculated
in terms of $c_0,\ldots,c_M$ without direct reference to
$\alpha_1,\ldots,\alpha_M$. We see that if $c_0,\ldots,c_M$ are
real, then $m_{\lambda}(\alpha_1,\ldots,\alpha_M)$ is also
real. However, in general
$m_{\lambda}(\alpha_1,\ldots,\alpha_M)$ is not necessarily
positive even if all the $c_{\ell}$ are positive. So, in the
setting of Theorem~\ref{theorem:main}, the type of summation
appearing in \eqref{eqn:Akexpanded} will typically involve both
addition and subtraction, and the nonnegativity of $A_k(x)$ for
real $x$ is not directly obvious from this representation.

\begin{example}
Let $g(z)=4+z^4$ and let $f(z) \in \LP$. Then $\Phi(z,t)$ is
\[
f(z+(1+i)t)f(z+(1-i)t)f(z+(-1+i)t)f(z+(-1-i)t) = \sum_{k=0}^{\infty} A_{2k}(z)t^{2k}.
\]
A small calculation shows that
\begin{multline*}
\tfrac{3}{2}A_4(x)=-f(z)^3 f^{(4)}(z)+3 f(z)^2 f''(z)^2+6 f'(z)^4 \\
+4 f(z)^2 f^{(3)}(z) f'(z) -12 f(z) f'(z)^2 f''(z).
\end{multline*}
According to Theorem~\ref{theorem:main} this expression is
nonnegative for all $x \in \R$.
\end{example}

\begin{example}
Let $f(z)=\prod_{i=1}^n(z+r_i)$ where $r_1,\ldots,r_n \in \R$ and let
\[
g(z) = z^{2m}+1 = \prod_{j=1}^{2m}(z+\omega^{2j-1})
\]
where $\omega=\exp(2\pi i /4m)$ and $m \in \N$. Calculating as
in \eqref{eqn:Phiproductsum} gives
\begin{align*}
\Phi(z,t) & = \prod_{i=1}^n \left((z+r_i)^{2m}+t^{2m}\right)  \\
& = f(z)^{2m} \prod_{i=1}^n\Big( 1 + \frac{t^{2m}}{(z+r_i)^{2m}}\Big) \\
& = f(z)^{2m} \sum_{k=0}^n e_k\Big(\tfrac{1}{(z+r_1)^{2m}},\ldots,\tfrac{1}{(z+r_n)^{2m}}\Big) (t^{2m})^k,
\end{align*}
where $e_k$ is the $k$th elementary symmetric function of $n$
variables evaluated at $(z+r_1)^{-2m},\ldots,(z+r_n)^{-2m}$.
Thus, if $x \in \R$,
\[
A_{2mk}(x) = f(x)^{2m} e_k\Big(\tfrac{1}{(x+r_1)^{2m}},\ldots,\tfrac{1}{(x+r_n)^{2m}}\Big)
\]
is expressed as a sum of squares of real numbers and is
therefore nonnegative. Dilcher and Stolarsky studied the zeros
of $A_{2mk}(x)$. (See Prop.~2.3 and \S3
of~\cite{DilcherStolarsky1992}).
\end{example}

\begin{example}
This example illustrates how certain modifications to
Theorem~\ref{theorem:main} are possible. Let $f(z)$ be a
polynomial with \textit{negative} roots. Then
$f(z)=\prod_{i=1}^n(z+r_i)$ where each $r_i > 0$. Let
\[
g(z)=1+z+z^2 = (z+e^{\pi i /3})(z+e^{-\pi i /3}).
\]
Although $g(z)$ is not even as in the hypothesis of the
theorem, its coefficients are nonnegative. Then
\begin{align*}
\Phi(z,t) & = f(z+te^{\pi i/3})f(z+te^{-\pi i/3}) \\
& = \underbrace{f(z)^2}_{A_0(z)} + \underbrace{f(z)f'(z)}_{A_1(z)} t + \underbrace{\tfrac{1}{2!}\big(2f'(z)^2-f(z)f''(z)\big)}_{A_2(z)} t^2 \\
& \qquad + \underbrace{\tfrac{1}{3!}\bigl(3 f'(z)f''(z)-2f(z)f'''(z)\big)}_{A_3(z)}t^3  \\
& \qquad + \underbrace{\tfrac{1}{4!}\bigl(6 f''(z)^2-4f'(z)f^{(3)}(z)-f(z)f^{(4)}(z)\big)}_{A_4(z)} t^4 + \cdots
\end{align*}
On the other hand, calculating as in \eqref{eqn:Phiproductsum} gives
\begin{align*}
\Phi(z,t)
& = \prod_{i=1}^n \Big( (z+r_i)^2+(z+r_i)t+t^2\Big) \\
& = f(z)^2 \prod_{i=1}^n \left(1+\frac{t}{z+r_i}+\frac{t^2}{(z+r_i)^2}\right) \\
& = f(z)^2  \sum_{k=0}^{2n}\Big( \sum_{\substack{\lambda \vdash k \\ \lambda_j \leq 2}} m_{\lambda}(\tfrac{1}{z+r_1},\ldots,\tfrac{1}{z+r_n})\Big) t^k,
\end{align*}
where the inner sum is over all unordered partitions $\lambda$
of $k$ whose parts satisfy $\lambda_j \leq 2$ and where
$m_{\lambda}$ is the monomial symmetric function in $n$
variables for the partition $\lambda$ evaluated at
$(z+r_1)^{-1},\ldots,(z+r_n)^{-1}$. From the last expression,
we see that each
\[
A_k(x) \geq 0
\]
for all $x \geq 0$ and all $k \geq 0$.
\end{example}

\bibliographystyle{ws-procs9x6}
%\bibliographystyle{amsplain}
%\bibliography{criterion_for_reality}

\end{document}